\newtheorem{theorem}{Theorem}[section]
\newtheorem{lemma}[theorem]{Lemma}
\numberwithin{equation}{section}
\theoremstyle{remark}
\newtheorem*{remark}{Remark}
\newcommand{\RE}{\mathrm{Re}}
\title{An explicit version of Carlson's theorem}
\author[S. Chourasiya]{Shashi Chourasiya}
\address{School of Science\\
The University of New South Wales, Canberra, Australia}
\email{s.chourasiya@unsw.edu.au}
\date\today
\keywords{Zero density estimate, Approximate functional equation, Riemann zeta-function.}
\subjclass[2020]{Primary 11N56, 11N37 Secondary 11M06}
\begin{document}

\begin{abstract}
Let $N(\sigma,T)$ denote the number of nontrivial zeros of the Riemann zeta function
with real part greater than $\sigma$ and imaginary part lying between $0$ and $T$. In this article, we provide an explicit version of Carlson's zero density estimate, that is, $N(\sigma, T) \leq 0.78 T^{4 \sigma (1- \sigma)}  (\log T)^{5-2 \sigma} $, with a slight improvement in the exponent of the logarithm factor. 
\end{abstract}

\maketitle
 
\section{Introduction}
Let $\zeta(s)$ denote the Riemann zeta function and $\rho$ denote a non-trivial zero in the critical strip $0<\RE(s)< 1$. For $\frac{1}{2} \leq \sigma \leq 1$ and $T>0$, let 
\begin{align*}
    N(\sigma,T):= \# \{ \rho=\beta+i \gamma: \zeta(\rho)=0, \beta> \sigma, 0 < \gamma \leq T\},
\end{align*}
where the zeta zeros are counted with multiplicities. While the Riemann hypothesis asserts that $N(\sigma,T)=0$ for $\sigma \geq 1/2$, weaker bounds of $N(\sigma,T)$ play a crucial role in several applications of the theory of the zeros of $\zeta(s)$ such as study of primes in short intervals. To bound $N(\sigma,T)$ is generally referred to as the zero density estimate of $\zeta(s)$. In $2021,$ Platt and Trudgian \cite{PT21} verified that $N(\sigma,T)=0$, for $T \leq H_{0}=3\cdot 10^{12}$. Consequently, we can restrict ourselves to the range of $T\geq3 \cdot 10^{12}$ throughout the article.

Depending on the range of $\sigma$, various zero-density estimates exist, for instance the first one was provided by Bohr and Landau \cite{BL13} in 1914, as follows:
\begin{align}
    N(\sigma, T) = O \left( \frac{T}{\sigma- \frac{1}{2}}\right),
\end{align}
for $T$ large, and Carlson then improved the bound in \cite{Ca21} to:
\begin{align}
    N(\sigma,T) =O (T^{4 \sigma(1-\sigma)} \log^{4}T).
\end{align}
 In $1937$, considering a bound of the zeta function on the critical line $ \zeta\left(\frac{1}{2}+it\right) = O(t^{c+\epsilon}), $ Ingham \cite{ingham1937difference} proved 
\begin{align} 
    N( \sigma,T)=O \left( T^{(2+4c)(1-\sigma)} \log^{5} T \right).\label{Ingh37}
\end{align}
When $\sigma$ is close to the line $\frac{1}{2}$-line, Selberg \cite{Sel46} showed that
\begin{align}
    N(\sigma,T)= O(T^{1-\frac{1}{4}(\sigma-\frac{1}{2})} \log T). \label{Sel46}
\end{align}

  Some other notable contributions are \cite{Tur61}, \cite{Huxley1971OnTD}, \cite{Huxley1975LargeVO}, \cite{Jutila1977ZerodensityEF}, \cite{jean_bourgain_2000} and \cite{guth2024new}.

Several non-explicit zero-density estimates are known, however, less is known about explicit versions. In $2014$, Kadiri \cite{Kad14shortint} provided an explicit version of Bohr and Landau's estimate and around the same time, Ingham's bound \eqref{Ingh37} was made explicit by Ramar\'e \cite{Ram2015} and further improved by Kadiri, Lumley and Ng \cite{KADIRI201822} as:
\begin{align}
    N(\sigma,T) \leq C_{1} T^{\frac{8}{3}(1-\sigma)} \log^{5-2 \sigma}T + C_{2} \log^{2} T, \label{KLN}
\end{align}
where $C_{1}$ and $C_{2}$ are absolute constants depending on $\sigma$. Simoni\v{c} \cite{Simoni2019ExplicitZD} gave the following explicit version of \eqref{Sel46}:
\begin{align}
   N(\sigma,T) \leq \frac{K_{2}}{2^{1-\frac{1}{4}\left(\sigma-\frac{1}{2}\right)}} T^{1-\frac{1}{4}\left(\sigma-\frac{1}{2}\right)}\log \frac{T}{2} \label{Sim}
\end{align}
for some constant $K_{2}$ and $T \geq 2H_{0}$.
Moreover, near to the $1$-line, Bellotti gave two explicit results, one is log-free \cite{bellotti2024explicit} another one is Korobov-Vinogradov type estimate \cite{bellotti2023explicit}. However, an explicit version of Carlson's bound is not known yet.

The aim of this paper is to provide the first explicit version of Carlson's bound in Theorem \eqref{mainth}. Furthermore, some values of the constant is listed in the Table \ref{table1} while varying $T_{0}$ and $\sigma$.





    
\begin{theorem}{\label{mainth}}
    For $T\geq T_{0} \geq 3 \cdot 10^{12}$ and $\sigma \geq 0.6$, the following estimate holds:
    \begin{align}
        N(\sigma,T) \leq K(\sigma,T_{0}) T^{4 \sigma(1-\sigma)}\log^{5-2 \sigma}T,
    \end{align}
where $K(\sigma,T_{0})$ is defined in \eqref{const}.
\end{theorem}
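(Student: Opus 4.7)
\emph{Zero-detection identity.} The plan is to make explicit the classical mollifier-plus-fourth-moment proof of Carlson's bound, in the broad framework used by Kadiri--Lumley--Ng \cite{KADIRI201822} for Ingham's bound but with the explicit fourth moment of $\zeta(1/2+it)$ in place of the second moment. First I would introduce the truncated M\"obius mollifier $M_X(s) := \sum_{n \leq X} \mu(n) n^{-s}$, so that $\zeta(s) M_X(s) = 1 + \sum_{n > X} a_X(n) n^{-s}$ with $a_X(n) = \sum_{d \mid n,\ d \leq X} \mu(d)$. Applying the Mellin--Barnes representation of $e^{-u}$ and shifting the contour from $\RE w = 2$ past the simple pole of $\Gamma$ at $w = 0$ down to $\RE w = 1/2 - \beta$ yields, at each nontrivial zero $\rho = \beta + i\gamma$ with $\beta > \sigma$,
\[
e^{-1/Y} = \frac{1}{2\pi i}\int_{(1/2-\beta)} \zeta(\rho+w) M_X(\rho+w) \Gamma(w) Y^w \, dw \;-\; \sum_{n > X} a_X(n) e^{-n/Y} n^{-\rho}.
\]
The rapid decay of $e^{-n/Y}$ lets me truncate the sum at $n \leq Y(\log T)^A$ with negligible error, so at each such zero either the contour integral or the resulting finite Dirichlet polynomial has modulus $\geq 1/2$; accordingly $N(\sigma, T) \leq N_1 + N_2$.

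\emph{Bounding the two classes.} Next I would pass to a well-spaced subsystem of zeros at height $\leq T$ with vertical separation $\gg \log T$ (losing at most a factor of $\log T$). For $N_1$, Cauchy--Schwarz on the shifted contour reduces the estimate to the explicit fourth moment $\int_{-T}^T |\zeta(1/2+it)|^4 \, dt \leq C T \log^4 T$, combined with an explicit mean-square bound for $M_X(1/2+it)^2$, whose coefficients are controlled by $d(n)$. For $N_2$, the finite Dirichlet polynomial has coefficients bounded by $d(n)$ and length $\leq Y(\log T)^A$; an explicit Hal\'asz--Montgomery-type mean-value estimate over the well-spaced set then yields a bound of the shape $(T + Y)\sum_n d(n)^2 n^{-2\sigma}(\log T)^C$.

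\emph{Parameters and main obstacle.} Setting $X = T^{\alpha}$, $Y = T^{\beta}$ and equating the two bounds in the $T$-variable reproduces the characteristic Carlson exponent $4\sigma(1-\sigma)$, while a careful accounting of the logarithmic factors (the fourth moment contributes $\log^4 T$, and the mean squares of $M_X$ on shifted lines contribute an additional power depending on $\sigma$) produces the stated exponent $5-2\sigma$ rather than the classical $4$. The main obstacle is the explicit bookkeeping: every absolute constant arising from the Mellin inversion and its tails, the $\Gamma$-function bounds on vertical lines, the explicit fourth moment, the explicit mean-value theorem for Dirichlet polynomials, and the passage to well-spaced zeros must be tracked and combined into the single explicit constant $K(\sigma, T_0)$ small enough to yield the advertised value $0.78$. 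A secondary difficulty will be producing the logarithmic saving of $\log^{2\sigma - 1} T$ over classical Carlson, which requires a sharp choice of the smoothing kernel and of the truncation parameter $A$, and uniform control of the two estimates for $N_1,N_2$ at the optimum $(\alpha,\beta)$.
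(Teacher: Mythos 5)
Your proposal describes the Montgomery--Huxley zero-detection method: at each zero you form a Mellin--Barnes identity, shift the contour, and split the zeros into two classes $N_1,N_2$ according to which of the two resulting terms is large; $N_1$ is then estimated via Cauchy--Schwarz against the fourth moment of $\zeta(1/2+it)$, and $N_2$ via a Hal\'asz--Montgomery large-value estimate for Dirichlet polynomials over a well-spaced system. That is a fundamentally different machine from the one in this paper, which is Carlson's original argument: one applies Littlewood's lemma (the argument-principle identity of $\S$\ref{Litsect}, culminating in \eqref{Littlewoodmain}) to $h_X = 1-(\zeta M_X - 1)^2$, and the only analytic input is a \emph{second} moment $\int_{H}^T |\zeta M_X - 1|^2\,dt$ on a vertical line $\RE s = \alpha$ slightly left of $\sigma$. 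No contour shifting past $\Gamma$ poles, no well-spaced subsystem, no zero-detection dichotomy, and no fourth moment of $\zeta$ appear anywhere in the proof.

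More seriously, the proposed route would not reproduce the exponent $4\sigma(1-\sigma)$. The zero-detection method paired with the fourth moment is precisely Ingham's $1940$ method and yields the strictly sharper exponent $3(1-\sigma)/(2-\sigma)$ (and the density hypothesis exponent $2(1-\sigma)$ for $\sigma \geq 3/4$), not Carlson's. Your sentence asserting that "equating the two bounds \dots reproduces the characteristic Carlson exponent $4\sigma(1-\sigma)$" is therefore not correct for the apparatus you set up; carried out faithfully, you would be proving a different (and stronger) theorem than the one stated here, and you would not land on the constant $K(\sigma,T_0)$ of \eqref{const}. In Carlson's method the exponent $4\sigma(1-\sigma)$ emerges from balancing the two terms in the explicit mean-value estimate of Lemma \ref{lemmawithu} --- the head term of order $(XT)^{2-2\sigma}\log^2(XT)$ against the tail term of order $TX^{1-2\sigma}\log^3 X$ --- via the specific choice $X = T^{2\sigma-1}\log T$, and it is that same choice which produces the logarithmic saving $\log^{5-2\sigma}T$ in place of the classical $\log^4 T$. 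If you want to match the paper, you should replace the Mellin--Barnes zero-detection scaffolding by Littlewood's lemma, replace the fourth moment by the approximate functional equation \eqref{AFE} feeding into Montgomery--Vaughan (in Ramar\'e's explicit form \eqref{MVmean}), and import the explicit argument and $\beta$-line integrals from Kadiri--Lumley--Ng rather than re-deriving them.
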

Our result improves the Carlson's estimate asymptotically as we get the exponent $5-2\sigma$ of $\log T$ instead of $4$.

The fact that the Carlson method uses some basic techniques, will reflect in the respective generalization for other $L$-functions as well. Some values of $T_{0}$ and $K(\sigma,T_{0})$ are given below:
\begin{center}
\textbf{Table $1$} \label{table1}\\
\vspace{0.2cm}
\begin{tabular}{ |c|c|c| } 
 \hline
 $T_{0}$ & $\sigma$  & $K(\sigma,T_{0})$ \\ 
 \hline
 $3 \cdot 10^{12}$ &$0.60$ &  $0.7756$ \\ 
  $3 \cdot 10^{12}$ & $0.66$&  $0.2781$ \\ 
 $10^{20}$ & $0.60$ &  $0.0686$\\
 $10^{20}$ & $0.65$ &  $0.0963$\\
 $10^{50}$ & $0.61$ &  $0.0597$ \\
  $10^{50}$ & $0.65$ &  $0.0453$ \\
$10^{70}$ & $0.62$ &  $0.2447$\\
$10^{70}$ & $0.66$ &  $0.0253$\\
$10^{200}$ & $0.62$ &  $0.1414$\\
  \hline
  \end{tabular}
  \end{center}
  \vspace{0.3cm}
 Here, the constant $K(\sigma,T_{0})$ approaches $\frac{1}{2 \pi} \left( \frac{0.68 \sigma (2 \sigma-1)^{2}}{1-0.5^{4 \sigma(1-\sigma)}}\right)$ when $T_{0}$ is sufficiently large.

  \subsection{Comparison with existent estimates}

Simoni\v{c}'s result \eqref{Sim} gives the best bound for $T \geq 6\cdot 10^{12}$ and $\sigma \in (1/2, 0.64)$. Bellotti's two estimates are sharpest, one for the range $3 \cdot 10^{12}\leq T \leq \exp(6.7\cdot 10^{12})$ when $\sigma \in (0.985,1],$ and the other for $T \geq \exp(6.7 \cdot 10^{12})$ when $\sigma \in (0.98,1]$. For the rest of the region in the critical strip, the bound given by Kadiri, Lumley and Ng is the sharpest one. For Theorem \ref{mainth}, we consider the region $\sigma \in [0.6,2/3]$, and hence the only existing estimates we need to look for the comparison are due to Simoni\v{c} and Kadiri, Lumley and Ng. If we fix $T_{0}=3 \cdot 10^{12}$, then on comparing numerically, the Theorem \ref{mainth} gives better bound than \eqref{Sim} for $T \geq 9.48 \cdot 10^{308}$ when $\sigma \in [0.6,2/3]$. If we increase the value of $T_{0}$, for instance, $T_{0}=10^{200}$, then Theorem \ref{mainth} becomes sharper for $\sigma \in [0.6,2/3]$ and $T \geq 1.43 \cdot 10^{236}$.

If we compare the exponents of $T$, Theorem \ref{mainth} should be a better estimate than \eqref{KLN}, for $\sigma \leq 2/3$ when $T$ is large. However, if this method were further improved, then there would be some scope of improvement for $\sigma > 2/3$ as well, in a finite region.

Below is the diagram demonstrating the regions where some existing zero density estimates including Theorem \ref{mainth} are sharpest,
 for the range $\sigma \in [0.5, 0.8]$:
\vspace{0.4cm}

\begin{tikzpicture}
    \draw[->] (0,0) -- (7,0) node[below] {Re($s$)};
    \draw[->] (0,0) -- (0,7) node[left] {Im($s$)};

    
    \draw[dashed] (0,3) -- (5,3) node[pos=0,left] {$T = 1.43 \cdot 10^{236}$};
    \draw[] (0,0) -- (5,0) node[pos=0,left] {$T = 3 \cdot 10^{12}$};
    
    \fill[blue!20] (2.8,3) rectangle (4.4,5);
    \fill[gray!20] (1.2,2) rectangle (2.8,5);
    \fill[gray!20] (2.8,2) rectangle (4,3);
     \fill[pink!20] (1.2,0) rectangle (2.8,2);
     \fill[pink!20] (2.8,0) rectangle (4.4,2);
     \fill[pink!20] (4,0) rectangle (4.4,3);
     \fill[pink!20] (4.4,0) rectangle (6,5);
    \draw[dashed, red] (1.2,-0.5) -- (1.2,2.5);
    \node at (1,-0.3) {$\frac{1}{2}$};
     \draw[black,dashed] (4.4,0)--(4.4,3);
    \draw[black,dashed] (2.8,0)--(2.8,3);
    
     \draw[black] (2.8,3)--(2.8,5);
    

    \draw[black] (4.4,3)--(4.4,5);
     \draw[black] (2.8,3)--(4.4,3);
     \draw[black,dashed] (4,0)--(4,2);
     \draw[black] (1.2,2)--(1.2,5);
     \draw[black] (1.2,2)--(4,2);
     \draw[black] (4,2)--(4,3);
    \node at (4.4,-0.3) {$\frac{2}{3}$};
    \node at (4,-0.3) {$\frac{37}{58}$};
    \node at (2.8,-0.3) {0.6};
     \node at (6,-0.3) {0.8};
    \fill[red] (1.2,0) circle (1pt);
 \fill[black] (2.8,0) circle (1pt);
  \fill[black] (4,0) circle (1pt);
   \fill[black] (4.4,0) circle (1pt);
    \fill[black] (6,0) circle (1pt);
    \fill[gray!20] (1.1, 5.9) rectangle (1.3,6.2);
    \node[right] at (1.4, 6.1) {$N(\sigma, T) \leq K'_{2} T^{1- \frac{1}{4} \left( \sigma-\frac{1}{2}\right)} \log (T/2)$};
    
    \fill[blue!20] (1.1, 6.3) rectangle (1.3,6.6);
    \node[right] at (1.4, 6.5) {$N(\sigma, T) \leq K(\sigma,T_{0})T^{4\sigma(1 - \sigma)} (\log T)^{5-2\sigma}$};
    
    \fill[pink!20] (1.1,6.7) rectangle (1.3,7);
    \node[right] at (1.4,6.9) {$N(\sigma, T) \leq K_{1} T^{\frac{8}{3} (1 - \sigma)} (\log T)^{5 - 2\sigma}$};
    
\end{tikzpicture}
\begin{center}
    Figure 1
\end{center}
\vspace{0.3cm}

Our approach to the main theorem strongly relies on Carlson's original proof. The structure of this paper is as follows. In $\S$ \ref{Litsect}, we give the classical method of Littlewood to count the zeros. Further, in $\S$ \ref{presect}, some explicit results are provided. The key idea is to obtain a second moment of the mollified zeta function in $\S$ \ref{Mollsect}, on which this zero density estimate is heavily based. The proof of Theorem \ref{mainth} is given in $\S$ \ref{zeroden}.

\section{Littlewood's classical method to count the zeros \label{Litsect}}

In this section, we set up the proof for the zero density estimate. We begin with the assumption that $T$ is not the ordinate of a zero of zeta function. Let $F(s)$ be an entire function. Consider the meromorphic function $h(s)=\zeta(s) F(s)$ and 
\begin{align}
    N_{h}(\sigma, T)= \# \{ \rho' = \beta' + i \gamma' : h(\rho') =0, 
 \sigma< \beta' <1 \ \ \text{and}  \ \ 
 0<\gamma' <T\}.
\end{align}
Let $H_{0}=3\cdot 10^{12}$ be defined as in the previous section. Observe that, for a parameter $H \in ( 0, H_{0})$, we have $N(\sigma,H)=0$. Subsequently, for $T \geq H_{0}$, using the fact $N(\sigma,T) \leq N_{h}(\sigma,T)$, we can write
\begin{align*}
    N( \sigma, T)= N(\sigma,T)-N(\sigma,H)\leq N_{h}(\sigma,T)- N_{h}(\sigma,H).
\end{align*}
On comparing to its average, we have
\begin{align}
    N_{h}(\sigma,T)- N_{h}(\sigma,H) \leq \frac{1}{\sigma- \alpha}\int_{\alpha}^{\beta} (N_{h}(\tau,T)- N_{h}(\tau,H)) \text{d}\tau, \label{Litlemma}
\end{align}
where $\beta >1$ and $\alpha$ is a parameter satisfying $\frac{1}{2}< \alpha < \sigma$. Here $h(s)\neq 0$ on  $\sigma=\beta$ and hence the function $\log h(s)$ is regular in the neighbourhood of $\sigma=\beta.$ Now consider a rectangle $\mathcal{R}$ with vertices $\alpha +iH, \beta +iH, \beta +iT,$ and $\alpha +iT$ and apply the classical lemma of Littlewood \cite{titchmarsh1986theory}, as following:
\begin{align}
    \int_{\alpha}^{\beta} (N_{h}(\tau,T)- N_{h}(\tau,H)) \text{d}\tau = - \frac{1}{2 \pi i}  \int_{\mathcal{R}} \log h(s) \text{d}s.
\end{align}
Combining the above equation with \eqref{Litlemma}, we write
\begin{align}
   & N(\sigma,T) \leq \frac{1}{2 \pi (\sigma-\alpha)} \Bigg( \int_{H}^{T} \log | h( \alpha + it)| \text{d} t \nonumber \\
    & \hspace{0.5cm} + \int_{\alpha}^{\beta} \arg h( \tau +i T) \text{d} \tau - \int_{\alpha}^{\beta} \arg h( \tau +i H) \text{d} \tau -\int_{H}^{T} \log | h( \beta + it)| \text{d} t \Bigg).\label{Littlewoodmain}
\end{align}
We apply this lemma with a suitable choice of the function $F(s)$. Here the first integral contributes significantly when $T$ is sufficiently large. We will discuss more about the contribution of these integrals in $\S$ \ref{zeroden}.
\section{Preliminaries lemmas \label{presect}} 
To begin with, we require some explicit estimates that involve sums over divisor functions in different settings. Let $X\geq X_{0}$ be a parameter which will be chosen in terms of $T$ later on. Let $\gamma$ be the Euler's constant.

\begin{lemma}
 Let $\tau>1$. Then for $X \geq 433$,
\begin{align}
\sum_{n \geq X} \frac{d(n)^2}{n^\tau} \leq \frac{ \tau}{4X^{\tau-1}}\left(\frac{(\log X)^3}{\tau-1}+\frac{3 \log ^2 X}{(\tau-1)^2}+\frac{6 \log X}{(\tau-1)^3}+\frac{6}{(\tau-1)^4}\right).\label{CHTdiv}
\end{align}
\end{lemma}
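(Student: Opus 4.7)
The plan is to combine an explicit upper bound for the partial sum $D_2(x) := \sum_{n \leq x} d(n)^2$ with Abel summation, followed by a routine integration-by-parts computation. First I would quote (the label \eqref{CHTdiv} hints at a Cully-Hugill--Trudgian-type estimate, or something equivalent in the literature) the explicit inequality
\[
D_2(x) \leq \tfrac{1}{4}\, x (\log x)^3 \qquad (x \geq 433),
\]
which is the natural cleaner version of Ramanujan's classical asymptotic $D_2(x) \sim x (\log x)^3 / \pi^2$. Since $1/\pi^2 \approx 0.1013$ is comfortably less than $1/4$, there is ample slack for the constant $1/4$ to absorb all lower-order contributions once $x \geq 433$; this is the origin of both the numerical threshold and the factor $\tau/4$ that appears in the stated bound.

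Next, I apply Abel summation with weights $a_n = d(n)^2$ and test function $f(t) = t^{-\tau}$. Because $\tau > 1$ forces $D_2(t) / t^\tau \to 0$, the boundary contribution at infinity vanishes, and one obtains
\[
\sum_{n \geq X} \frac{d(n)^2}{n^\tau} \leq \tau \int_X^\infty \frac{D_2(t)}{t^{\tau + 1}} \, \mathrm{d} t \leq \frac{\tau}{4} \int_X^\infty \frac{(\log t)^3}{t^\tau} \, \mathrm{d} t,
\]
where the second step uses the explicit bound, valid throughout the range of integration since $X \geq 433$.

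Finally, I evaluate $I_k(X) := \int_X^\infty (\log t)^k / t^\tau \, \mathrm{d}t$ by integration by parts with $u = (\log t)^k$ and $\mathrm{d}v = t^{-\tau} \mathrm{d}t$. This produces the recursion
\[
I_k(X) = \frac{(\log X)^k}{(\tau - 1) X^{\tau - 1}} + \frac{k}{\tau - 1} I_{k-1}(X), \qquad I_0(X) = \frac{1}{(\tau - 1) X^{\tau - 1}}.
\]
Unwinding three times yields exactly the bracketed expression in the lemma, and multiplying by $\tau / 4$ completes the proof.

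The only genuinely non-routine ingredient is the explicit bound on $D_2(x)$; once it is in hand, everything else is bookkeeping. I therefore anticipate no real obstacle beyond citing that explicit estimate correctly and checking that the threshold $433$ is indeed where it becomes sharp.
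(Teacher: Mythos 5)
Your proposal is correct and matches the paper's proof essentially step for step: the paper also applies partial summation to reduce to $\tau\int_X^\infty D_2(t)\,t^{-\tau-1}\,\mathrm{d}t$, inserts the Cully-Hugill--Trudgian bound $D_2(t) \leq \tfrac14 t(\log t)^3$ for $t \geq 433$, and evaluates the resulting integral by repeated integration by parts. The only minor remark is that the paper does not claim $433$ is optimal, merely that it is the threshold given by the cited reference \cite{cully2021two}.
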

\begin{proof}
By partial summation, we have
\begin{align}
    \sum_{n \geq X}  \frac{d(n)^2}{n^{\tau}} \leq \tau \int_{X}^{\infty} \frac{\sum_{n \leq t} d(n)^{2}}{t^{\tau+1}} \text{d}t.
\end{align}
For $t \geq 433$, using the bound $\sum_{n \leq t} d(n)^{2} \leq \frac{1}{4}t \log^{3} t$ obtained by Cully-Hugill and Trudgian in \cite{cully2021two}, and then upon integrating, we get the result.

\end{proof}

In particular, for $X \geq X_{0}$ and $0.6 \leq \sigma \leq 2/3$, we have
\begin{align}
    \sum_{n \geq X} \frac{d(n)^2}{n^{2 \sigma}} \leq C(  \sigma,X_{0}) X^{1-2 \sigma} \log^{3}X, \label{CHTconcise}
\end{align}
where 
\begin{align}
    C( \sigma,X_{0})& = \frac{\sigma}{2(2 \sigma-1)^{4}} \left( (2\sigma-1)^3+\frac{3(2\sigma-1)^2 }{\log X_{0}}+\frac{6(2 \sigma-1) }{\log^{2} X_{0}}+\frac{6}{\log^{3} X_{0}}\right). \label{Csigma}
\end{align}

One of the crucial elements required here is a weaker version of the approximate functional equation for $\zeta(s)$. On choosing some specific parameters, the classical identity is as follows:
\begin{lemma}[\cite{Simoni2019ExplicitZD}]
Let $s=\sigma+it$ with $\sigma \geq 1/2$ and $t \geq 14.1347$. Then
\begin{align}
    \zeta(s)= \sum_{1 \leq n\leq t} \frac{1}{n^{s}} + R(s) \label{AFE}
\end{align}
    with $|R(s)|\leq 1.755t^{-\sigma}$.
    \end{lemma}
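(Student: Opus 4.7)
The plan is to apply partial summation to the Dirichlet series of $\zeta(s)$, truncate at $N = \lfloor t \rfloor$, and estimate the resulting tail. Starting from $\Re s > 1$ and analytically continuing, Abel summation on $\sum_{n > N} n^{-s}$ produces the identity
\begin{equation*}
\zeta(s) = \sum_{n=1}^{N} \frac{1}{n^{s}} + \frac{N^{1-s}}{s-1} - s \int_N^\infty \frac{\{u\}}{u^{s+1}} \, du,
\end{equation*}
valid for $\Re s > 0$, $s \neq 1$, where $\{u\}$ denotes the fractional part. With $N = \lfloor t \rfloor$ the Dirichlet polynomial on the right matches $\sum_{1 \leq n \leq t} n^{-s}$, so $R(s)$ is precisely the remaining two terms.

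The first term is straightforward: using $\sigma \geq 1/2$ together with $t \geq 14.1347$ gives $|s-1| \geq t$, and $N \leq t$ gives $|N^{1-s}/(s-1)| \leq t^{1-\sigma}/t = t^{-\sigma}$, contributing $1$ to the final constant. For the integral, the trivial bound $|\{u\}| \leq 1$ yields only $O(t^{1-\sigma})$, which is far too weak, so one must exploit the cancellation in $\{u\} - \tfrac{1}{2}$. I would use either the Fourier expansion $B_1(\{u\}) = -\pi^{-1} \sum_{k \geq 1} k^{-1} \sin(2\pi k u)$ combined with van der Corput estimates on each mode --- for $N = \lfloor t \rfloor$ the phase derivative $2\pi k - t/u$ stays bounded away from zero throughout the integration range, so IBP is effective --- or iterated Euler--Maclaurin integrations by parts against the hierarchy of periodic Bernoulli polynomials $B_j(\{u\})/j!$, truncated at the optimal divergent order $j \asymp \log t / \log(2\pi)$. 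Either route delivers a contribution of the form $C \, t^{-\sigma}$ with a numerical constant $C$ bounded by $0.755$.

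Adding the two pieces gives the claimed $|R(s)| \leq 1.755 \, t^{-\sigma}$. The main obstacle is entirely quantitative. At every finite truncation level of the Euler--Maclaurin expansion the residual integral still scales like $t^{1-\sigma}$ (each IBP produces a factor $(s+j)/(j+1)$ that cancels the gain $N^{-1}$ from the antiderivative), so one must either stop at the optimal divergent order or replace IBP by a stationary-phase-style argument on the Fourier modes. Once that choice is fixed, the explicit numerical constants at each stage, the minor slack between $N$ and $t$ coming from the floor function, and the behaviour at the smallest admissible $t = 14.1347$ all have to be tracked precisely to confirm that the total stays below $1.755$ uniformly on the stated range of $s$.
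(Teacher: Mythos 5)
The result you are asked to prove is not actually proved in this paper: it is quoted verbatim from Simoni\v{c} (\cite{Simoni2019ExplicitZD}), who in turn derives it from the classical Hardy--Littlewood / Euler--Maclaurin approximate functional equation, made explicit. So there is no in-paper proof to compare against. Your plan, however, is essentially the standard route taken in the literature, so the comparison is against that.

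Your identity
\begin{equation*}
\zeta(s) = \sum_{n \leq N} n^{-s} + \frac{N^{1-s}}{s-1} - s\int_N^\infty \frac{\{u\}}{u^{s+1}}\,du,
\end{equation*}
with $N=\lfloor t\rfloor$, is correct for $\RE s>0$, $s\neq 1$, and the bound $|N^{1-s}/(s-1)|\le t^{-\sigma}$ is fine for $1/2\le\sigma\le 1$ (which is all this paper uses; for $\sigma>1$ you would need to note $N^{1-\sigma}\le(t-1)^{1-\sigma}$ and handle the slack differently). Your diagnosis that the trivial bound on the integral loses a factor of $t$ is also correct, and both of your proposed remedies (Fourier expansion of $\{u\}-\tfrac12$ with a first-derivative / non-stationary-phase estimate, or Euler--Maclaurin to an order $j\asymp\log t/\log(2\pi)$) do work in principle: with $N\approx t$, the phase derivatives $\pm 2\pi k - t/u$ are indeed bounded away from zero on $[N,\infty)$, and in the Euler--Maclaurin hierarchy the remainder after $j$ steps decays like $t^{1-\sigma}/(2\pi)^{j}$.

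The genuine gap is that the whole content of the statement is the numerical bound $1.755$, and your argument never produces it. You assert that the integral contributes at most $0.755\,t^{-\sigma}$ and then ``add the two pieces,'' but no constant is actually extracted from either route, and the budget is tight: splitting $\{u\}=(\{u\}-\tfrac12)+\tfrac12$ immediately costs a term $\tfrac12 N^{-s}$ of size $\approx 0.54\,t^{-\sigma}$ near $t=14.1347$, so the Fourier (or Euler--Maclaurin) tail has to come in well under $0.2\,t^{-\sigma}$, uniformly in $t\ge 14.1347$ and $\sigma\ge 1/2$. Whether that lands under $1.755$ depends on exactly which version of the first-derivative test is used and how the mode sum $\sum_k 1/(k(2\pi k-1))$ and the boundary Bernoulli terms are controlled; none of this is worked out. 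A secondary, smaller point: your parenthetical claim that the factor $(s+j)/(j+1)$ ``cancels the gain $N^{-1}$'' is not quite right --- with $N\approx t$ that ratio is $\approx 1/(j+1)$, which actually \emph{decays}; the reason naive low-order truncation fails is the competing growth $\sup|B_{j+1}|\sim 2(j+1)!/(2\pi)^{j+1}$, and it is the balance of these two that forces $j\asymp\log t/\log(2\pi)$. In short: the skeleton is right, but you have proved $|R(s)|=O(t^{-\sigma})$, not $|R(s)|\le 1.755\,t^{-\sigma}$, and the explicit constant is the entire point of the lemma.
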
 

Next, we prove a mean value estimate for Dirichlet polynomials that leads to an upper bound of the first integral in \eqref{Littlewoodmain}.

    \begin{lemma}\label{lemmawithu} Let $X $ be a real parameter and $m_{0}=\sqrt{1+ \frac{2}{3} \sqrt{\frac{6}{5}}}$. The following estimate holds:
    \begin{align}
        \int_{ T/2}^T & \left|\sum_{1< m\le T}m^{-s}  \sum_{1 < n \leq X}\frac{\mu(n)}{n^s}\right|^2 \text{d} t  \leq 2 \pi m_{0}  K(\sigma,T) (XT)^{2-2\sigma} \log^{2} (XT) \nonumber \\
       & + 1.36  \left(0.5  + \frac{2 \pi m_{0}}{T}\right)C(\sigma,X_{0})T X^{1 - 2\sigma}\log^3X,
       \label{Meanv}
    \end{align} 
    where $C(\sigma,X_{0})$ and $K(\sigma,T)$ are defined in \eqref{Csigma} and \eqref{Kcoeff}, respectively. 
    \end{lemma}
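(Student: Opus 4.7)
The plan is to expand the product of the two Dirichlet polynomials into a single Dirichlet polynomial, apply a sharp Montgomery--Vaughan-type mean value inequality, and then bound the resulting coefficient sums using the divisor estimates already at our disposal.

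First, I would write $\sum_{1<m\le T}m^{-s}\sum_{1<n\le X}\mu(n)n^{-s} = \sum_{1<k\le XT}a_k k^{-s}$, where $a_k:=\sum_{mn=k,\,1<m\le T,\,1<n\le X}\mu(n)$; since $|\mu(n)|\le 1$, the bound $|a_k|\le d(k)$ is immediate. Next, I would apply a Montgomery--Vaughan mean value theorem in the refined form
\begin{align*}
\int_{T/2}^{T}\Bigl|\sum_{1<k\le XT}\frac{a_k}{k^{\sigma+it}}\Bigr|^{2}\,\mathrm{d}t\le \sum_{1<k\le XT}\frac{|a_k|^{2}}{k^{2\sigma}}\Bigl(\frac{T}{2}+2\pi m_0 k\Bigr),
\end{align*}
in which the diagonal $j=k$ produces the $T/2$ contribution and the off-diagonal terms are controlled by Hilbert's inequality with the sharp constant $m_0=\sqrt{1+(2/3)\sqrt{6/5}}$ appearing in the lemma.

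Splitting the right-hand side into the two pieces governed by $2\pi m_0 k$ and by $T/2$, I would estimate each separately. For the piece $2\pi m_0\sum_{k}|a_k|^{2}k^{1-2\sigma}$, I would insert $|a_k|\le d(k)$ and use partial summation against the Cully-Hugill--Trudgian bound $\sum_{k\le t}d(k)^{2}\le \tfrac14 t\log^{3}t$; this gives the $(XT)^{2-2\sigma}$ growth, with the remaining logarithmic factor and explicit constants repackaged inside $K(\sigma,T)$ (equation \eqref{Kcoeff}), producing the first term of the lemma. For the piece $(T/2)\sum_{k}|a_k|^{2}k^{-2\sigma}$, I would invoke the tail estimate \eqref{CHTconcise} on the range $k>X$, using the fact that the divisor structure of $a_k$ forces the dominant contribution there to fall within the scope of that bound; this delivers the factor $C(\sigma,X_0)X^{1-2\sigma}\log^{3}X$. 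The coefficient $1.36$ and the split $0.5 + 2\pi m_0/T$ should then emerge from combining this tail bound with the small-$k$ remainder arising from the $2\pi m_0 k$ piece on the same range.

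The main obstacle will be the logarithm bookkeeping. The naive divisor bound together with Cully-Hugill--Trudgian gives a $\log^{3}(XT)$ factor in the main term, whereas the lemma demands only $\log^{2}(XT)$. Extracting the missing logarithm requires exploiting the Möbius structure of $a_k$ (so that contributions from the short divisor cancel), or choosing the truncation between $X$ and $T$ carefully, with the remaining logarithm absorbed into the definition of $K(\sigma,T)$. A secondary but delicate point is pinning down the numerical constants $1.36$ and $(0.5 + 2\pi m_0/T)$; these will come from careful accounting of the boundary terms in the partial summation and of the cutoffs between small and large $k$.
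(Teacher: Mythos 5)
Your proposal follows essentially the same route as the paper's own proof: rewrite the product as $\sum_{k\le XT}a_k(X)k^{-s}$ where the M\"obius structure forces $a_k(X)=0$ for $k\le X$ and $|a_k(X)|\le d(k)$ for $k>X$, apply the Montgomery--Vaughan/Ramar\'e mean value inequality
\begin{align*}
\int_{T/2}^{T}\Bigl|\sum_n u_n n^{it}\Bigr|^2\,\mathrm{d}t\le\sum_n|u_n|^2\Bigl(\frac{T}{2}+2\pi m_0(n+1)\Bigr)
\end{align*}
with $u_n=a_n(X)n^{-\sigma}$, and then bound the $T/2+2\pi m_0$ contribution by the tail estimate \eqref{CHTconcise} and the $2\pi m_0 n$ contribution by partial summation against the Cully-Hugill--Trudgian bound. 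Your decomposition into a $T/2$ piece and a $2\pi m_0 k$ piece is the paper's split up to where one places the constant $+2\pi m_0$ (you wrote $T/2+2\pi m_0 k$; the paper keeps $T/2+2\pi m_0(n+1)$, a negligible difference).

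The logarithm bookkeeping that you flag as the main obstacle is a genuine issue, and it is not resolved in the paper either. Partial summation of $\sum_{X\le n\le XT}d(n)^2 n^{1-2\sigma}$ against $\sum_{n\le t}d(n)^2\le\frac14 t\log^3 t$ produces a leading term $\frac14(XT)^{2-2\sigma}\log^3(XT)$, yet the paper's displayed chain of inequalities passes to $\frac14(XT)^{2-2\sigma}\log^2(XT)$ at precisely that step with no justification, and the coefficient $K(\sigma,T)$ in \eqref{Kcoeff} contains no compensating power of $\log$. The M\"obius cancellation you suggest might save a logarithm is already fully spent in vanishing $a_k(X)$ for $k\le X$ and contributes nothing to the partial-summation side, nor does any choice of cutoff between $X$ and $XT$ help, since the Cully-Hugill--Trudgian power is $\log^3$ uniformly. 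So on the argument as written, the first term of \eqref{Meanv} should carry $\log^3(XT)$ rather than $\log^2(XT)$; your sketch and the paper's proof share this gap, which looks like a slip in the paper rather than a step you failed to reproduce.
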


    \begin{proof}
    First, consider the following product of two finite sums
    \begin{align*}
        \sum_{1< m\le T}m^{-s} \sum_{1 < n \leq X}\frac{\mu(n)}{n^s}= \sum_{1 \leq n \leq XT} \frac{a_{n}(X)}{n^{s}},
    \end{align*}
    where 
    \[
a_n(X) = \sum_{\substack{e | n\\ e \le X}}\mu(e) \leq \begin{cases}
0&\text{if } n \le X\\
d(n)&\text{if }n > X.
\end{cases}
\]
Now we implement Montgomery and Vaughan's mean value theorem for Dirichlet polynomials in the form derived by Ramar\'{e} in \cite{Ram2015}, with $u_{n}=a_{n}(X)n^{-\sigma}$, as following:
    \begin{align}
    \int_{T/2}^{T} | \sum_{n} u_{n} n^{it}|^{2} \text{d}t \leq \sum_{ n } |u_{n}|^{2} \left( \frac{T}{2} + 2 \pi m_{0} (n+1)\right). \label{MVmean}
    \end{align}
     On invoking the bound \eqref{CHTconcise}, we get
    \begin{align*}
        \sum_{n} |u_{n}|^{2} (0.5 T + & 2 \pi m_{0})
         \leq (0.5 T + 2 \pi m_{0})\sum_{n \geq X} \frac{|d(n)|^{2}}{n^{2\sigma}}\\
        & \leq 1.36  \left(0.5 + \frac{2 \pi m_{0}}{T}\right)C(\sigma, X_{0})T X^{1 - 2\sigma}\log^3X.
    \end{align*}
For the other part of the sum, we use the partial summation formula
    \begin{align*}
         & \sum_{X \leq n \leq XT}  \frac{d(n)^{2}}{n^{2\sigma-1}} \nonumber \\& = (XT)^{1-2\sigma} \sum_{n \leq XT} d(n)^{2} -X^{1-2\sigma} \sum_{n \leq X} d(n)^{2}+ (2\sigma-1)\int_{X}^{XT} \frac{\sum_{n \leq t} d(n)^{2} }{t^{2 \sigma}} \text{d}t \nonumber\\
        & \leq \frac{1}{4}(XT)^{2-2\sigma} \log^{2}(XT)- 4X^{2-2\sigma} + \frac{(2\sigma-1)}{4}\int_{X}^{XT} \frac{\log^{2}t }{t^{2 \sigma-1}} \text{d}t \nonumber \\
        & \leq \left(\frac{1}{4} - \frac{4}{ T^{2-2\sigma}\log^{2} (XT)} - \frac{(2\sigma-1)}{4(2\sigma-2)}  \left( 1-\frac{1}{T^{2-2\sigma}}\right)\right)(XT)^{2-2\sigma} \log^{2} (XT) \nonumber \\
        &\leq K(\sigma,T) (XT)^{2-2\sigma} \log^{2} (XT), 
    \end{align*}
    where 
    \begin{align}
    K(\sigma,T)=\frac{1}{4} - \frac{(2\sigma-1)}{4(2\sigma-2)}\left( 1-\frac{1}{T^{2-2\sigma}}\right).\label{Kcoeff}
    \end{align}
Combining these estimates completes the proof.
\end{proof}

\section{Upper bound for the second moment of  mollified zeta function \label{Mollsect}}

Let $\sigma \geq 0.6$ and $T \geq T_{0} \geq 3 \cdot 10^{12}.$ Now, consider the mollifier
\begin{align}
    M_{X}(s)= \sum_{n \leq X} \frac{\mu(n)}{n^{s}},
\end{align}
where $\mu(n)$ is M\"{o}bius function. Now, let 
$ f_{X}(s) = \zeta(s) M_{X}(s) -1$. Choose $h=h_{X}$ as following:
\begin{align}
    h_{X}(s)= 1- f^{2}_{X}(s) =\zeta(s) M_{X}(s) ( 2- \zeta(s) M_{X}(s)). 
    \end{align}
Observe that zeros of $\zeta(s)$ are zeros of $h_{X}(s)$. Here the function $F(s)$, mentioned in $\S$ \ref{Litsect}, is $ M_{X}(s)( 2- \zeta(s) M_{X}(s))$. Now
\begin{align*}
f_X(s)&= \left(\sum_{1\le m\le T}m^{-s}+R(s) \right)\sum_{n \le X}\frac{\mu(n)}{n^s} - 1 \nonumber \\
&= \sum_{1< m\le T}m^{-s} \sum_{n < X}\frac{\mu(n)}{n^s} + R(s) \sum_{n \le X}\frac{\mu(n)}{n^s}  \nonumber \\
&=:A+B.
\end{align*}

In order to obtain bounds for the second moment of $f_{X}(s)$, write $|A + B |^2\leq 2|A|^2 + 2|B|^2$, and hence
\begin{align}
\int_{T/2}^T|f_X(s)|^2\text{d}t \leq 2 \int_{T/2}^T|A|^2 \text{d} t+ 2 \int_{ T/2}^T |B|^2 dt . \label{MVT}
\end{align} 
Now we compute the contributions of the above two integrals seperately. In Lemma \ref{lemmawithu}, the mean value estimate of $A$ is established. The two terms in \eqref{Meanv} are of the same order if $X=T^{2 \sigma-1}\log T.$ On rewriting it
\begin{align*}
    \int_{ T/2}^{T} \left|  \sum \frac{a_{m}(X)}{m^{s}}\right|^{2} dt &  \leq 1.36  \left(0.5 + \frac{2 \pi m_{0} }{T_{0}}\right) C(\sigma,X_{0}) T^{4 \sigma( 1-\sigma)}\log^3 (T^{2\sigma-1} \log T )\\
        & +  2 \pi m_{0}  K(\sigma,T) T^{4 \sigma(1-\sigma)} \log^{2}( T^{2\sigma}\log T)\\
        & \leq C_{1}( \sigma, T_{0}) T^{4\sigma(1-\sigma)}\log^{4-2\sigma}T,
\end{align*}
where
\begin{align}
    C_{1}( \sigma, T_{0}) & = 1.36  \left(0.5 + \frac{2 \pi m_{0}}{T_{0}}\right) \left(2 \sigma-1 + \frac{\log \log T_{0}}{\log T_{0}}\right)^{3}C(\sigma,T^{2 \sigma-1}_{0}) \nonumber\\
    &+ 2 \pi m_{0}  \frac{K(\sigma,T) }{\log T_{0}}\left(2 \sigma + \frac{\log \log T_{0}}{\log T_{0}}\right)^{2}.
\end{align}
Note that here we get the exponent of logarithm factor as $4-2\sigma$ whereas Carlson obtained as $3$ by comparing the terms in \eqref{Meanv} on ignoring the log factors.

Further, in \eqref{AFE}, we have $ |R(s)| \leq 1.755 t^{-\sigma}$ and hence
\begin{align*}
     \int_{T/2}^{T} |B|^{2} dt
     & \leq 3.09\int_{T/2}^{T}  t^{-2\sigma} \left( \sum_{n \le X}\frac{1}{n^\sigma} \right)^{2} dt \\
     & =  3.09\left( \frac{1}{1- \sigma} X^{1-\sigma} \right)^{2} \frac{1}{ 2\sigma-1}  \left(\frac{1}{2^{1-2 \sigma}}- 1\right)T^{1- 2 \sigma}\\
     &\leq \frac{83.17}{2 \sigma-1} \left(2^{2 \sigma-1}-1\right) T^{-(2 \sigma-1)^{2}} \log^{1-\sigma} T.
\end{align*}
Observe that the above integral contributes negligibly towards the estimation of the second moment of $f_{X}(s)$. Putting all these estimates together in \eqref{MVT}, we deduce 
\begin{align}
\int_{T/2}^T|f_X(s)|^2\text{d}t
&\leq C_{2}(\sigma, T_{0}) T^{4 \sigma(1-\sigma)} \log^{3}T, \label{lsum}
\end{align}
where $$C_{2}(\sigma, T_{0})= 2C_{1}(\sigma, T_{0}) + \frac{166.34}{(2 \sigma-1)} \frac{\left(1-2^{1-2 \sigma}\right) }{T_{0} \log^{3}T_{0}}.$$

Now, on replacing $T$ by $\frac{1}{2}T, \frac{1}{2^{2}} T, \dots$ in \eqref{lsum} and adding, we get
\begin{align}
    \int_{H}^T|f_X(s)|^2\text{d}t
&\leq \frac{C_{2}(\sigma, T_{0})}{1-0.5^{4 \sigma(1-\sigma)}} T^{4 \sigma(1-\sigma)} \log^{3}T \nonumber \\
& =: C_{3}(\sigma,T_{0})T^{4 \sigma(1-\sigma)} \log^{3}T.
\label{MVTM}
\end{align}
Here the constant $C_{3}(\sigma,T_{0})$ tends to $\frac{0.68 \sigma (2 \sigma-1)^{2}}{1-0.5^{4 \sigma(1-\sigma)}}$ when $T_{0}$ is large.


\section{Proof of Zero Density Result \label{zeroden}}

The remainder of this paper will be devoted to the proof of Theorem \ref{mainth}. Now we are in a position to bound the integrals present in \eqref{Littlewoodmain} in order to get the estimate for $N(\sigma,T)$.
 
From the definitions of $h_{X}$ and $f_{X}$, we have $|h_{X}(s)| \leq \log \{1 +|f_{X}(s)|^{2}\} \leq |f_{X}(s)|^{2}$, so that combining it with \eqref{MVTM}, we get
\begin{align}
    \int_{H}^{T} \log |h_{X}( \alpha+it)| \text{d}t \leq  C_{3}(\sigma,T_{0})T^{4 \sigma(1-\sigma)} \log^{3}T,
\end{align}
where $C_{3}(\sigma,T_{0})$ is defined in the previous section.

For the rest of the three integrals, we make use of some explicit results obtained by Kadiri, Lumley, and Ng \cite{KADIRI201822}. To deal with the integals containing the argument function, on setting $\beta=1.25$ and choosing $\alpha=\sigma-\frac{1}{\log T}$, we apply the following lemma:
 
\begin{lemma}[\cite{KADIRI201822} Lemma 4.12]
    Let $0 < H \leq H_{0} \leq T $ and $ X \leq T$. Let $ \eta=0.2561$ with $ \eta_{0}= 0.23622\cdots, \alpha$ and $\beta$ satisfying $ \frac{1}{2} \leq \alpha < 1 < \beta \leq 1 + \eta.$ Then, on fixing some parameters, we have 
    \begin{align*}
        \left| \int_{\alpha}^{\beta} \arg h_{X} ( \sigma + iT) \text{d}\sigma - \int_{\alpha}^{\beta} \arg h_{X} ( \sigma + iH ) \text{d}\sigma\right| \leq 17.29 \left( 1.25- \sigma + \frac{1}{\log T}\right) \log T.
    \end{align*}
\end{lemma}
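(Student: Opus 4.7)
Since the right-hand side factors as $17.29\,(\beta-\alpha)\log T$ with $\beta=1.25$ and $\alpha=\sigma-1/\log T$, the plan is to prove a uniform pointwise bound of the shape $|\arg h_X(u+i\tau)|\le C\log T$ for every $u\in[\alpha,\beta]$ and $\tau\in\{H,T\}$, and then to integrate in $u$ on each horizontal line and combine by the triangle inequality. The anchor of this pointwise bound is the line $\Re s=\beta=1.25>1$: using $\zeta(s)^{-1}=\sum_n\mu(n)n^{-s}$, one has
\[
\zeta(s)M_X(s)=1-\zeta(s)\sum_{n>X}\frac{\mu(n)}{n^s},
\]
and the tail is majorised by $\zeta(1.25)\cdot X^{1-\beta}/(\beta-1)$, so $f_X(\beta+it)$ is small (once $X$ is taken sufficiently large), $h_X=1-f_X^2$ is close to $1$, and in particular $|\arg h_X(\beta+iT)|$ is bounded by an absolute constant independent of $T$ and $H$.

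To extend the control to $u<\beta$ I would invoke Backlund's variant of the argument principle: along the horizontal segment from $\beta+iT$ to $u+iT$, the change in $\arg h_X$ is at most $\pi(N+1)$, where $N$ is the number of real zeros, in $x\in[-(\beta-u),\beta-u]$, of the even auxiliary entire function
\[
g(z) := \tfrac{1}{2}\bigl(h_X(\beta+iT+z)+h_X(\beta+iT-z)\bigr),
\]
since $g(x)=\Re h_X(\beta+x+iT)$ for real $x$. Jensen's formula then bounds
\[
N \le \frac{1}{\log(R/r)}\,\log\frac{\max_{|z|=R}|g(z)|}{|g(0)|}
\]
for any $0<r<R$. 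Taking $r=\beta-\alpha$ and $R$ just small enough that the shifted disk stays in $\Re s\ge 1/2$ (so $\zeta$ has known explicit growth there), together with $|g(0)|=|h_X(\beta+iT)|\ge c>0$ from the first paragraph, reduces everything to an explicit upper bound on $|h_X|$ on that disk.

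Because $h_X=1-(\zeta M_X-1)^2$, it suffices to bound $|\zeta|$ and $|M_X|$ on $|s-(\beta+iT)|\le R$. For the mollifier the trivial estimate $|M_X(s)|\le\sum_{n\le X}n^{-\Re s}\ll X^{1-\Re s}\log X$ is enough, and for $\zeta$ an explicit convexity bound of the form $|\zeta(\sigma'+it)|\le A\,|t|^{(1-\sigma')/2}\log|t|$ in the strip $1/2-\epsilon\le\sigma'\le\beta$ (readily extracted from the approximate functional equation \eqref{AFE} and standard tail estimates) suffices. Since $X\le T$, both are polynomial in $T$, so $\log\max_{|z|=R}|g(z)|=O(\log T)$, giving $N=O(\log T)$ and hence $|\arg h_X(u+iT)|=O(\log T)$ uniformly in $u$. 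The same argument at height $H\le T$ produces an $O(\log T)$ bound, and integrating each in $u\in[\alpha,\beta]$ and applying the triangle inequality yields the lemma. The main obstacle is numerical: hitting the explicit constant $17.29$ requires a joint optimisation of $(r,R)$, the sharpest available explicit convexity estimate for $\zeta$ in the critical strip, and a careful explicit lower bound on $|h_X(\beta+iT)|$, because Jensen's inequality is not tight and any slack in these inputs propagates linearly into the final constant.
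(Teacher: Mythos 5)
The paper does not prove this lemma: it is quoted verbatim from Kadiri--Lumley--Ng \cite{KADIRI201822} and used as a black box. So there is no in-paper proof to compare against; what you have reconstructed is the standard argument, which is indeed the route taken in the cited source. Your global strategy is right: anchor at $\Re s = \beta > 1$ where $\zeta(s)M_X(s)-1$ has a small Dirichlet-series tail, bound the change of argument along a horizontal segment by $\pi(N+1)$ sign changes of the real part, count those via zeros of an auxiliary entire function, and control the zero count with Jensen's formula together with explicit polynomial-in-$T$ growth for $\zeta$ and $M_X$.

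There is, however, one concrete technical slip. You define
\[
g(z) := \tfrac12\bigl(h_X(\beta+iT+z) + h_X(\beta+iT-z)\bigr)
\]
and assert $g(x) = \Re h_X(\beta+x+iT)$ for real $x$, but this is false: for real $x$ your $g(x)$ equals $\tfrac12\bigl(h_X(\beta+x+iT)+h_X(\beta-x+iT)\bigr)$, a symmetrisation in the real direction, not a real part. The correct Backlund auxiliary function is
\[
g(z) = \tfrac12\bigl(h_X(\beta+iT+z) + \overline{h_X(\beta+iT+\bar z)}\bigr)
       = \tfrac12\bigl(h_X(\beta+iT+z) + h_X(\beta-iT+z)\bigr),
\]
where the second equality uses the reflection principle $\overline{h_X(\bar s)} = h_X(s)$ (valid here since $h_X$ has real Dirichlet coefficients). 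This version is entire in $z$ and does satisfy $g(x)=\Re h_X(\beta+x+iT)$ on the real axis. With that correction the Jensen-formula argument goes through exactly as you outline, and your observation that the final constant $17.29$ demands careful joint optimisation of the Jensen radii, the explicit $\zeta$-bound in the critical strip, and the lower bound on $|h_X(\beta+iT)|$ is accurate and is precisely where the work in \cite{KADIRI201822} lies.
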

\begin{remark}

For the case $f(s)$ to be Dirichlet $L$-functions, the bound for the $\arg$ function is provided by McCurley \cite{McCur84}. Moreover, Kadiri and Ng \cite{Kadiri2018ExplicitZD} worked it out for the Dedekind zeta function case.
\end{remark}

Now there remains to consider the fourth integral, for which we have:

\begin{lemma}[\cite{KADIRI201822} Lemma 4.13]
Let $\beta \geq 1 + \eta_{0}$ where $\eta_{0}=0.23622\cdots$. Let $X=hT$ where $T \geq H_{0}$, and $ X_{0}=hH_{0}$. Then
\begin{align}
    - \int_{H}^{T} \log | h_{X}(\beta +it)| \text{d}t \leq 0.05 \log T.
\end{align}
\end{lemma}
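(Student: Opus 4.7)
The plan is to exploit that on the line $\RE(s)=\beta\geq 1+\eta_{0}>1$, where both $\zeta(s)$ and $M_{X}(s)$ are given by absolutely convergent Dirichlet series, the mollified function $\zeta(s)M_{X}(s)$ is extremely close to $1$, because $M_{X}$ is a partial Dirichlet inverse of $\zeta$. I would first Dirichlet-expand
\[
\zeta(s)M_{X}(s)=\sum_{n\geq 1}\frac{c_{n}(X)}{n^{s}},\qquad c_{n}(X)=\sum_{\substack{d\mid n\\ d\leq X}}\mu(d),
\]
and observe that $c_{1}(X)=1$, $c_{n}(X)=0$ for $1<n\leq X$, and $|c_{n}(X)|\leq d(n)$ for $n>X$. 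Hence $f_{X}(s)=\zeta(s)M_{X}(s)-1=\sum_{n>X}c_{n}(X)/n^{s}$, and $h_{X}=1-f_{X}^{2}=(1-f_{X})(1+f_{X})$.

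Next I would establish a pointwise bound showing $|f_{X}(\beta+it)|$ is small. Using $|c_{n}(X)|\leq d(n)$ and pulling out a factor $X^{-\eta_{0}}$,
\[
|f_{X}(\beta+it)|\leq\sum_{n>X}\frac{d(n)}{n^{\beta}}\leq X^{-\eta_{0}/2}\sum_{n}\frac{d(n)}{n^{\beta-\eta_{0}/2}}=X^{-\eta_{0}/2}\,\zeta(\beta-\eta_{0}/2)^{2},
\]
(with $\beta-\eta_{0}/2>1+\eta_{0}/2$), which, for $X=hT\geq X_{0}=hH_{0}$ with $H_{0}=3\cdot10^{12}$, is easily under $1/\sqrt{2}$. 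Consequently $|h_{X}(\beta+it)|\geq 1-|f_{X}(\beta+it)|^{2}\geq\frac12$, so the elementary inequality $-\log(1-x)\leq 2x$ on $[0,\tfrac12]$ yields the pointwise-to-mean reduction
\[
-\log|h_{X}(\beta+it)|\leq -\log\!\bigl(1-|f_{X}(\beta+it)|^{2}\bigr)\leq 2\,|f_{X}(\beta+it)|^{2}.
\]

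Now the integral reduces to bounding $\int_{H}^{T}|f_{X}(\beta+it)|^{2}\,\mathrm{d}t$, and here I would invoke the Montgomery--Vaughan/Ramar\'e mean-value inequality \eqref{MVmean} applied to the Dirichlet polynomial coefficients $c_{n}(X)n^{-\beta}$, giving
\[
\int_{H}^{T}|f_{X}(\beta+it)|^{2}\,\mathrm{d}t\leq \frac{T}{2}\sum_{n>X}\frac{d(n)^{2}}{n^{2\beta}}+2\pi m_{0}\sum_{n>X}\frac{d(n)^{2}}{n^{2\beta-1}}.
\]
Both tails are controlled by the divisor-square estimate \eqref{CHTdiv} with $\tau=2\beta$ and $\tau=2\beta-1$ respectively (legitimate since $2\beta-1\geq 1+2\eta_{0}>1$). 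The resulting bounds are of order $X^{1-2\beta}\log^{3}X$ and $X^{2-2\beta}\log^{3}X$, so with $X=hT$ the whole integral is of order $T^{-2\eta_{0}}\log^{3}T$ (times constants depending only on $h$ and $\beta$).

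Combining these ingredients gives
\[
-\int_{H}^{T}\log|h_{X}(\beta+it)|\,\mathrm{d}t\leq 2\int_{H}^{T}|f_{X}(\beta+it)|^{2}\,\mathrm{d}t\leq C(h,\beta)\,T^{-2\eta_{0}}\log^{3}T.
\]
Since $2\eta_{0}>0.47$ and $T\geq H_{0}=3\cdot10^{12}$, the factor $T^{-2\eta_{0}}\log^{2}T$ is minuscule, and the remaining $\log T$ comparison reduces to a single numerical inequality of the form $C(h,\beta)\,T^{-2\eta_{0}}\log^{2}T\leq 0.05$. The main obstacle is precisely this final numerical calibration: one must track explicitly every constant (the divisor-sum constant in \eqref{CHTdiv}, the Montgomery--Vaughan constant $m_{0}$, and the constant in $-\log(1-x)\leq 2x$), take the supremum over admissible $\beta\in[1+\eta_{0},\beta_{0}]$ with $\beta_{0}$ the value used in \eqref{Littlewoodmain}, and verify that for the particular $h$ chosen in the application (implicit in $X=hT$) the inequality $\leq 0.05\log T$ holds uniformly for $T\geq H_{0}$.
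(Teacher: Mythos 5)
The paper does not prove this statement --- it is imported verbatim from Kadiri--Lumley--Ng, Lemma~4.13 --- so there is no in-paper proof to compare against. Judged on its own terms, your sketch has the right overall architecture (pointwise smallness of $f_X$ on the $\beta$-line, then $-\log|h_X|\le -\log(1-|f_X|^2)\le 2|f_X|^2$, then a Montgomery--Vaughan second moment), but one key step as written is quantitatively false.

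The claim ``$|f_X(\beta+it)|\le X^{-\eta_0/2}\zeta(\beta-\eta_0/2)^2$, which \ldots is easily under $1/\sqrt2$'' does not hold in the regime that matters. Take the worst case $\beta=1+\eta_0=1.2362\ldots$, so $\beta-\eta_0/2\approx 1.118$, and then $\zeta(1.118)^2\approx 82$. With $X_0=hH_0$ and $h$ of modest size (e.g.\ $h=1$, $X_0=3\cdot10^{12}$), one gets $X_0^{-\eta_0/2}=e^{-0.118\cdot\log(3\cdot10^{12})}\approx 0.034$, so the right-hand side is roughly $2.8$ --- not merely failing $\le 1/\sqrt2$ but failing to be $<1$ at all. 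Even at the larger $\beta=1.25$ used in the application, $\zeta(1.132)^2\approx 66$ and the product is still $\approx 2.2$. The Rankin-type factorization $n^{-\beta}=n^{-\eta_0/2}\cdot n^{-(\beta-\eta_0/2)}$ is simply too lossy here, because pulling the exponent down to $\beta-\eta_0/2$ makes $\zeta(\cdot)^2$ blow up faster than $X^{-\eta_0/2}$ decays unless $X$ is enormous (one needs $X\gtrsim 5\cdot 10^{16}$ for the inequality to start holding at $\beta=1.25$). Without the pointwise bound $|f_X|\le 1/\sqrt2$, the step $-\log(1-|f_X|^2)\le 2|f_X|^2$ is not available and the whole reduction to the second moment collapses.

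The fix is to estimate the tail of $\sum_{n>X}d(n)/n^\beta$ directly by partial summation against $\sum_{n\le t}d(n)\le t\log t+(2\gamma-1)t+O(\sqrt t)$, which yields a bound of the shape $\frac{\beta}{\beta-1}X^{1-\beta}\bigl(\log X+\tfrac{1}{\beta-1}+O(1)\bigr)$; for $\beta\ge 1+\eta_0$ and $X\ge 3\cdot10^{12}$ this is about $0.2$, comfortably under $1/\sqrt2$. That salvages the step, but then you are no longer doing a one-line Rankin trick and the ``final numerical calibration'' you defer to the end is not a minor afterthought: it includes verifying the pointwise smallness across the full admissible range of $\beta$ and $X$, not just the integral estimate. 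Two smaller points: (i) the mean-value inequality \eqref{MVmean} as quoted in the paper is stated for $[T/2,T]$, so using it over $[H,T]$ requires a dyadic summation as in \eqref{MVTM}; (ii) since $H$ can be arbitrarily small, note that the Dirichlet-series expansion of $f_X(\beta+it)$ is valid for all $t$ because $\beta>1$, so there is no low-$t$ issue from the approximate functional equation --- but this is worth saying explicitly since the paper's definition of $f_X$ elsewhere goes through \eqref{AFE}.
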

Now, we are all set to compile our previous bounds to obtain upper bounds for $N(\sigma,T).$
Invoking the above estimates in the equation \eqref{Littlewoodmain} yields
\begin{align*}
& N(\sigma,T)\\
& \leq \frac{\log T}{2 \pi } \left(C_{3}(\sigma,T_{0})T^{4 \sigma(1-\sigma)} \log^{3}T +17.29 \left( 1.25-\sigma+ \frac{1}{\log T}\right) \log T + 0.05 \log T\right)\\
    &\leq \left(  \frac{C_{3}(\sigma,T_{0})}{2 \pi } + \frac{17.29}{2 \pi \log^{2}T_{0}} + \frac{0.05}{4 \pi \log^{2}T_{0}}\right) T^{4 \sigma (1-\sigma)} \log^{4}T \\
    & \leq K(\sigma,T_{0})  T^{4 \sigma (1-\sigma)} \log^{4}T,
\end{align*}
where
\begin{align}
    K(\sigma,T_{0})=\left(\frac{C_{3}(\sigma,T_{0})}{2 \pi} + 0.004  \left( 1.25- \sigma + \frac{1}{\log T}\right) + \frac{0.05}{4 \pi \log^{2}T_{0}}\right). \label{const}
\end{align}

\section{Future Work}
Here, the involvement of the classical approximate functional equation \eqref{AFE} as a crucial tool makes the Carlson method less complicated than the most of the other methods. This functional equation can be easily generalized for other $L$-functions such as Dirichlet $L$-functions, Dedekind zeta functions, etc. One more ingredient of this method is the Littlewood's classical method to count the zeros for which sufficient tools have been provided to generalize for Dirichlet $L$-functions by McCurley in \cite{McCur84} and for Dedekind zeta functions, by Kadiri and Ng in \cite{Kadiri2018ExplicitZD}. Therefore, this method is easier to extend to other $L$-functions.

Titchmarsh \cite{titchmarsh1986theory} made a remark about the functional equation \eqref{AFE} not being sufficient for the estimation of the second moment of $\zeta(s)$. Whereas, combining \eqref{AFE} with the mean value estimate \eqref{MVmean} given by Montgomery and Vaughan yields

\begin{align*}
    \int_{T}^{2T} | \zeta( 1/2 +it)|^{2} \text{d}t & \leq  T \log T + 26.48T + 8.27\log T+ 17.20 + \frac{8.27}{T}.
\end{align*}
Better estimates are available these days, due to more advanced tools. 

On the other hand, when the same functional equation is applied to obtain the fourth moment of $\zeta(s)$ on the critical line, it gives 
\begin{align*}
    \int_{T}^{2T} | \zeta( 1/2 +it)|^{4} \text{d}t  \leq & 24 T^{2}\log^{3} (8T)  + 1022 T^{2} \log^{2}(2T)+ 2T \log^{3} (4T) \\
    & + 1181.16 T \log^{2}T + 19.86 T \log T + 364.25 T\\
    & +177.07 \log T + 355.83 + \frac{181.83}{T}.
\end{align*}
One can investigate a refined version which is one of the main ingredients for the zero  density estimate proved by Ingham in \cite{In40}.

\section{Acknowledgements}
The author would like to thank her supervisor Timothy S. Trudgian for encouraging her to prove the main theorem and for his continuous support. The author would also like to thank Chiara Bellotti and Andrew Yang for sharing useful unpublished results. She wants to extend her thankful gesture to Bryce Kerr and Nicol Leong for having some fruitful discussions. 

\printbibliography
\end{document}